\documentclass[11pt]{amsart}
\usepackage[english]{babel}
\usepackage[T1]{fontenc}
\usepackage[latin1]{inputenc}
\usepackage{graphicx}
\usepackage{amsmath,amssymb,amsthm,mathrsfs,txfonts,ifthen,xr}
\usepackage{soul}
\usepackage{array, multirow}
\usepackage{stmaryrd}
\usepackage{color}
\usepackage{hyperref}
\usepackage{enumerate}
\usepackage[all]{xy}
\usepackage{tikz-cd}
\usetikzlibrary{decorations.pathreplacing}

\usepackage{ytableau}

\theoremstyle{plain}
\newtheorem{theo}{Theorem}[section]
\newtheorem*{theo*}{Theorem}

\theoremstyle{definition}
\newtheorem{definition}[theo]{Definition}

\theoremstyle{remark}
\newtheorem{remark}[theo]{Remark}

\def\A{{\rm A}}
\def\B{{\rm B}}
\def\C{{\rm C}}
\def\D{{\rm D}}

\def\U{{\rm U}}

\def\X{{\rm X}}
\def\Y{{\rm Y}}

\def\GL{{\rm GL}}	
\def\Id{{\rm Id}}

\def\Mat{{\rm Mat}}

\def\log{{\rm log}}

\def\Spec{{\rm Spec}}
\def\diag{{\rm diag}}

\allowdisplaybreaks
\title{On The Linearization of Alternative Means}
\author{Raluca Dumitru}
\address{Department of Mathematics and Statistics\\ University of North Florida \\ 1 UNF Drive \\ Jacksonville \\ FL 32224 \\ USA}
\email{raluca.dumitru@unf.edu}
\author{Jose Franco}
\address{Department of Mathematics and Statistics\\ University of North Florida \\ 1 UNF Drive \\ Jacksonville \\ FL 32224 \\ USA}
\email{jose.franco@unf.edu}
\author{Allan Merino}
\address{Department of Mathematics and Statistics\\ University of North Florida \\ 1 UNF Drive \\ Jacksonville \\ FL 32224 \\ USA}
\email{allan.merino@unf.edu}

\keywords{Alternative Means, Wasserstein Mean, Linearization}

\subjclass[2010]{Primary: 15A42; Secondary: 47A63.}

\date{}

\begin{document}

\maketitle

\begin{abstract}

Alternative means have recently attracted considerable attention in matrix analysis and operator theory. In this paper, we investigate the linearization problem for alternative means, namely the question of determining when a mean can be expressed as an affine combination of the matrices under consideration. We first prove a conjecture of Choi, Kim, and Lim for the Wasserstein mean. More precisely, we show that the Wasserstein mean $\A \diamond \B$ is linearizable if and only if $\A\B = \B\A$ and $\left|\Spec(\A^{-1}\B)\right| \leq 2$. We further establish a general rigidity theorem for a large class of alternative means. Specifically, we prove that an analogous characterization holds whenever the representing function $f$ is of the form $f(x) = \sqrt{h(x)}$, where $h$ is a non-affine operator monotone function. As consequences, we obtain linearization criteria for several families of alternative means, including logarithmic, harmonic, and power-type means.

\end{abstract}

\tableofcontents

\section{Introduction}

The theory of operator means introduced by Kubo and Ando \cite{KUBOANDO} provides a powerful framework for studying binary operations on the cone of positive definite matrices. A fundamental result of their theory states that every operator means is uniquely determined by an operator monotone function through a functional calculus formula. Classical examples include the arithmetic, geometric, harmonic, logarithmic, and power means, which play an important role in matrix analysis, operator theory, differential geometry, and applications\,.

\noindent A natural problem in the theory of matrix means is to determine when a given mean can be expressed in a simpler form involving only the matrices under consideration. Motivated by this question, the authors studied in \cite{RJA} the linearization problem for Kubo-Ando means. More precisely, given a Kubo-Ando mean $\sigma$, one asks for which pairs $\left(\A\,, \B\right) \in \mathscr{P}_{n}(\mathbb{D}) \times \mathscr{P}_{n}(\mathbb{D})$, there exist $a\,, b \in \mathbb{R}$ such that
\begin{equation*}
\A \sigma \B = a\A + b\B\,.
\end{equation*}
It was shown in \cite{RJA} that if the representing function of $\sigma$ is non-affine, then $\A \sigma \B$ is linearizable if and only if $\left|\Spec(\A^{-1}\B)\right| \leq 2$. Thus, for Kubo-Ando means, the linearization problem is completely determined by the spectrum of $\A^{-1}\B$\,.

\noindent More recently, the notion of an alternative mean was introduced in \cite{DFKC}. Given a continuous function $f: \left(0\,, \infty\right) \to \left(0\,, \infty\right)$ satisfying $f(1)=1$, the associated alternative mean is defined by
\begin{equation*}
\A \hat{\sigma}_{f} \B =  f\left(\A^{-1} \sharp \B\right)\A f\left(\A^{-1} \sharp \B\right)\,,
\end{equation*}
where $\sharp$ denotes the geometric mean. Alternative means may be viewed as a natural extension of the Kubo-Ando framework, obtained by replacing the classical functional calculus variable $\A^{-\frac{1}{2}}\B\A^{-\frac{1}{2}}$ with the geometric quantity $\A^{-1}\sharp \B$. In the commutative case, alternative means satisfies
\begin{equation*}
\A\sigma_f \B = \left(\A^{\frac{1}{2}}\hat{\sigma}_{f} \B^{\frac{1}{2}}\right)^{2}\,,
\end{equation*}
where $\sigma_{f}$ is the Kubo-Ando mean with representing function $f$, and where in the case of the geometric mean, both the alternative mean and Kubo-Ando means match\,.

\noindent A particularly important example is the Wasserstein mean, corresponding to the function
\begin{equation*}
f(t)=\frac{1+t}{2}\,.
\end{equation*}
In \cite{CHOIKIMLIM}, the authors proved that if $\A\B=\B\A$ and $\left|\Spec(\A^{-1}\B)\right|\leq 2$, then the Wasserstein mean is linearizable, and conjectured that the converse also holds. The first goal of this paper is to prove this conjecture\,.

\begin{theo*}
Let $\A\,, \B \in \mathscr{P}_{n}(\mathbb{D})$. Then the Wasserstein mean $\A \diamond \B$ is linearizable if and only if $\A\B=\B\A$ and $\left|\Spec(\A^{-1}\B)\right|\leq 2$\,.

\end{theo*}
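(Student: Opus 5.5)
The plan is to turn the linearization identity into a quadratic matrix equation in the single positive matrix $\mathrm{M}=\A^{-1}\sharp\B$. Then I would read that equation off entrywise in an eigenbasis of $\mathrm{M}$.

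\emph{Reformulation.} By definition $\A\diamond\B=\A\hat\sigma_f\B$ with $f(t)=\frac{1+t}{2}$. Put $\mathrm{M}=\A^{-1}\sharp\B\in\mathscr{P}_n(\mathbb{D})$. The geometric mean $\A^{-1}\sharp\B$ is the unique positive definite solution of the Riccati equation $\mathrm{X}\A\mathrm{X}=\B$, so $\mathrm{M}\A\mathrm{M}=\B$. Expanding gives
\begin{equation*}
\A\diamond\B=\tfrac14(\Id+\mathrm{M})\A(\Id+\mathrm{M})=\tfrac14\left(\A+\B+\mathrm{M}\A+\A\mathrm{M}\right).
\end{equation*}
Hence $\A\diamond\B=a\A+b\B$ holds if and only if
\begin{equation*}
\beta\,\mathrm{M}\A\mathrm{M}-\mathrm{M}\A-\A\mathrm{M}+\alpha\A=0, \qquad \alpha=4a-1,\ \beta=4b-1 .
\end{equation*}
Here I have used $\B=\mathrm{M}\A\mathrm{M}$ again to rewrite the $\B$ term.

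\emph{Entrywise analysis (the main step).} Choose an orthonormal basis in which $\mathrm{M}=\diag(\mu_1,\dots,\mu_n)$ with all $\mu_i>0$, and write $\A=(a_{ij})$ in this basis. The equation above becomes, for all $i,j$,
\begin{equation*}
q(\mu_i,\mu_j)\,a_{ij}=0,\qquad q(s,t)=\beta st-s-t+\alpha .
\end{equation*}
Since $\A$ is positive definite, $a_{ii}>0$, so every $\mu_i$ is a root of $q(t,t)=\beta t^2-2t+\alpha$. This polynomial is nonzero because of the $-2t$ term, so $\mathrm{M}$ has at most two distinct eigenvalues.

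If $\mathrm{M}$ has only one eigenvalue, then $\mathrm{M}$ is scalar and commutes with $\A$. If it has two distinct eigenvalues $\mu\neq\nu$, then necessarily $\beta\neq0$, since for $\beta=0$ the polynomial $q(t,t)$ is linear. The key identity is
\begin{equation*}
q(\mu,\nu)=\tfrac12\bigl(q(\mu,\mu)+q(\nu,\nu)\bigr)-\tfrac{\beta}{2}(\mu-\nu)^2=-\tfrac{\beta}{2}(\mu-\nu)^2\neq0 .
\end{equation*}
It forces $a_{ij}=0$ whenever $\mu_i\neq\mu_j$. So $\A$ is block diagonal with respect to the eigenspaces of $\mathrm{M}$, and therefore $\A\mathrm{M}=\mathrm{M}\A$.

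\emph{Conclusion of the forward direction.} Because $\A$ and $\mathrm{M}$ commute, $\B=\mathrm{M}\A\mathrm{M}=\A\mathrm{M}^2$. Then $\B\A=\A\mathrm{M}^2\A=\A\B$, so $\A\B=\B\A$. Moreover $\A^{-1}\B=\mathrm{M}^2$, and $\mathrm{M}^2$ has at most two eigenvalues because $\mathrm{M}$ does and $\mathrm{M}>0$. This gives $\left|\Spec(\A^{-1}\B)\right|\leq2$.

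\emph{Converse.} This is the result of \cite{CHOIKIMLIM}, but it can also be read off the reformulation. If $\A\B=\B\A$, then $\mathrm{M}=(\A^{-1}\B)^{1/2}$ commutes with $\A$, and $\mathrm{M}$ has at most two eigenvalues $\mu,\nu$.

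When $\mu\neq\nu$, set $\beta=2/(\mu+\nu)$ and $\alpha=\beta\mu\nu$. Then $q(t,t)$ vanishes at $\mu$ and $\nu$, so $q(\mathrm{M},\mathrm{M})=0$. Since $\mathrm{M}$ and $\A$ commute, the left side of the matrix equation equals $q(\mathrm{M},\mathrm{M})\A$, and hence the equation holds. Taking $a=(1+\alpha)/4$ and $b=(1+\beta)/4$ gives the linearization.

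When $\mu=\nu$, take $\beta=0$ and $\alpha=2\mu$; the same argument applies.

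I expect the only delicate point to be the off-diagonal step: the argument needs $q(\mu,\nu)\neq0$ for distinct roots, and the symmetric-form identity above supplies exactly that.
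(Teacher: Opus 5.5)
Your proof is correct and follows essentially the same route as the paper. You rewrite linearizability as $\alpha\A+\beta\mathrm{M}\A\mathrm{M}=\mathrm{M}\A+\A\mathrm{M}$ with $\mathrm{M}=\A^{-1}\sharp\B$, diagonalize $\mathrm{M}$, use $a_{ii}>0$ to force every eigenvalue to be a root of $\beta t^{2}-2t+\alpha$, and kill the off-diagonal blocks because the mixed coefficient cannot vanish at two distinct roots. Your identity $q(\mu,\nu)=-\frac{\beta}{2}(\mu-\nu)^{2}$ is a cleaner substitute for the paper's Vieta-plus-discriminant computation. You also make explicit two points the paper leaves implicit or to citation: the passage from $\Spec(\A^{-1}\sharp\B)$ to $\Spec(\A^{-1}\B)$ via $\A^{-1}\B=\mathrm{M}^{2}$ once commutativity is known, and a direct check of the converse instead of invoking Theorem~\ref{TheoremRFA1}.
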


\noindent The proof of the previous theorem reveals a more general rigidity phenomenon. Indeed, we show that the same conclusion remains valid for a large family of alternative means associated with operator monotone functions. First of all, in \cite{RJA}, we proved that for any alternative mean $\hat{\sigma}_{f}$, if $\A$ and $\B$ are such that $\A\B = \B\A$ and $\left|\Spec(\A^{-1}\B)\right| \leq 2$, then $\A \hat{\sigma}_{f} \B$ is linearizable\,.

\noindent In the paper, we proved that the converses also hold if $h: \left(0\,, \infty\right)\to \left(0\,, \infty\right)$ is a non-affine operator monotone function and $f(x)=\sqrt{h(x)}$\,.

\begin{theo*}

Let $h: \left(0\,, \infty\right) \to \left(0\,, \infty\right)$ be a non-affine operator monotone function and let $f(x) = \sqrt{h(x)}$. If the alternative mean $\A \hat{\sigma}_{f} \B$ is linearizable, then $\A\B=\B\A$ and $\left|\Spec(\A^{-1}\sharp\B)\right|\leq 2$\,.

\end{theo*}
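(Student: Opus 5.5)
Set $\X = \A^{-1}\sharp\B$, the unique positive definite matrix with $\X\A\X = \B$. Then $\X$ is positive definite and $\A\hat{\sigma}_f\B = f(\X)\A f(\X)$. The idea is to turn the linearization identity into a commutation statement about $\X$ and $\A$, and then into a polynomial condition on $\Spec(\X)$.

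\textbf{Step 1: rewrite the hypothesis.} Suppose $f(\X)\A f(\X) = a\A + b\X\A\X$ for some $a,b \in \mathbb{R}$. Conjugating by $\A^{-1/2}$, set $\Y = \A^{1/2}\X\A^{1/2}$. Since $f(\X) = \A^{-1/2}f(\Y')\A^{1/2}$ fails in general, I would instead diagonalize $\X = \U\,\diag(\lambda_1,\dots,\lambda_n)\,\U^*$ with $\U$ unitary, and write $\C = \U^*\A\U = (c_{ij})$, which is positive definite. The identity then reads entrywise
\begin{equation*}
c_{ij}\bigl(f(\lambda_i)f(\lambda_j) - a - b\lambda_i\lambda_j\bigr) = 0 \quad \text{for all } i,j.
\end{equation*}
The diagonal entries satisfy $c_{ii} > 0$, so every eigenvalue $\lambda$ of $\X$ solves $f(\lambda)^2 = a + b\lambda^2$, that is, $h(\lambda) = a + b\lambda^2$.

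\textbf{Step 2: bound the spectrum.} I need the equation $h(t) = a + bt^2$ to have at most two positive solutions. Set $g(t) = h(t) - a - bt^2$. A non-affine operator monotone $h$ on $(0,\infty)$ is strictly concave; this follows from the L\"owner integral representation, since its measure part is nonzero. Hence:
\begin{itemize}
\item if $b \le 0$, $g$ is strictly concave, so it has at most two zeros;
\item if $b > 0$, I substitute $s = t^2$ and use $k(s) = h(\sqrt{s})$. This function is again operator monotone, since $s \mapsto \sqrt{s}$ is operator monotone and compositions of operator monotone functions are operator monotone. It is non-affine because $h$ is. So $k$ is strictly concave, $k(s) - a - bs$ is strictly concave, and it has at most two zeros in $s > 0$, hence in $t > 0$.
\end{itemize}
Either way $|\Spec(\X)| \le 2$, which is the spectral claim.

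\textbf{Step 3: commutativity.} Take $\lambda_i \neq \lambda_j$, both roots. Then $f(\lambda_i)f(\lambda_j) = \sqrt{(a+b\lambda_i^2)(a+b\lambda_j^2)}$. By Cauchy--Schwarz, viewing it as an inner product of $(\sqrt a,\sqrt b\,\lambda_i)$ and $(\sqrt a,\sqrt b\,\lambda_j)$ when $a,b \ge 0$, this is at least $a + b\lambda_i\lambda_j$. Equality holds iff the vectors are proportional, which forces $ab = 0$. If $ab = 0$, then $h$ agrees with a monomial $c$ or $bt^2$ at the two roots. That is consistent, so I need to rule it out differently, as follows.
\begin{itemize}
\item If $b = 0$, then $h(\lambda_i) = h(\lambda_j) = a$. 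This contradicts the strict monotonicity of a non-constant operator monotone function; a constant $h$ is affine.
\item If $a = 0$, then $k(s) = bs$ at two distinct points $s > 0$. Since $k$ is strictly concave, positive, and satisfies $k(0^+) \ge 0$, it meets a line through the origin at most once. This is a contradiction.
\end{itemize}
The sign cases where $a$ or $b$ is negative are handled by the same strict-concavity argument: equality $f(\lambda_i)f(\lambda_j) = a + b\lambda_i\lambda_j$ is impossible when $\lambda_i \neq \lambda_j$. This strict inequality for mixed pairs is the step I expect to be the main obstacle, because one must use the full concavity of $k$, with $k(0^+) \ge 0$, rather than just the count of roots.

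Granting the strict inequality, $c_{ij} = 0$ whenever $\lambda_i \neq \lambda_j$. So $\C$ is block diagonal with respect to the eigenspaces of $\X$, i.e. $\A\X = \X\A$. Then $\B = \X\A\X$ commutes with $\A$, and $\A^{-1}\B = \X^2$. This gives $|\Spec(\A^{-1}\B)| \le 2$ as well.
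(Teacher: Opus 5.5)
Your architecture matches the paper's: you diagonalize $\X$, read $h(\lambda)=a+b\lambda^{2}$ off the diagonal of $\U^{*}\A\U$, and get $f(\lambda_i)f(\lambda_j)=a+b\lambda_i\lambda_j$ from any nonzero entry linking distinct eigenvalues. But Step 3 has a genuine gap. You rule out that mixed relation only for $a,b\ge 0$, and when $a<0$ or $b<0$ you explicitly ``grant'' it. The promised ``same strict-concavity argument'' is never carried out.

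The missing idea is that neither a sign hypothesis nor Cauchy--Schwarz is needed. The Lagrange-type identity
\begin{equation*}
(a+b\lambda_i^{2})(a+b\lambda_j^{2})-(a+b\lambda_i\lambda_j)^{2}=ab\,(\lambda_i-\lambda_j)^{2}
\end{equation*}
holds for all real $a,b$. Square the mixed relation and compare it with the product of the two diagonal relations $f(\lambda_i)^{2}=a+b\lambda_i^{2}$ and $f(\lambda_j)^{2}=a+b\lambda_j^{2}$; this gives $ab=0$ directly. Your own two subcases then finish the proof:
\begin{itemize}
\item $b=0$ contradicts the strict monotonicity of $h$;
\item $a=0$ contradicts the strict decrease of $k(s)/s$. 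The paper phrases this as injectivity of $f(x)/x$, which it obtains from operator monotonicity of the transpose.
\end{itemize}
This is exactly how the paper concludes. If you prefer to keep Cauchy--Schwarz, you must instead prove that two distinct roots force $a>0$ and $b>0$:
\begin{itemize}
\item for $b\le 0$, the right side $a+bt^{2}$ is non-increasing while $h$ strictly increases, so there is at most one root;
\item for $a\le 0<b$, the strictly concave function $k(s)-a-bs$ has a nonnegative limit at $0^{+}$, hence at most one zero on $(0,\infty)$.
\end{itemize}

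Step 2 also contains a sign slip. For $b<0$ the term $-bt^{2}$ is convex, so $g(t)=h(t)-a-bt^{2}$ need not be concave. For example, with $h(t)=\sqrt{t}$ you get $g''(t)=-\tfrac14 t^{-3/2}-2b>0$ for large $t$. It is for $b\ge 0$ that $g$ is directly strictly concave, which makes your substitution unnecessary in that case. The root count survives anyway, in either of two ways:
\begin{itemize}
\item use the monotonicity argument above when $b<0$, which is the paper's argument;
\item note that $k(s)-a-bs$ is strictly concave for every real $b$, so the substitution $s=t^{2}$ alone handles all cases at once.
\end{itemize}
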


\noindent As consequences, we obtain linearization criteria for several families of alternative means, including harmonic, logarithmic, and power-type means. These results considerably extend the conjecture of Choi, Kim, and Lim and show that the rigidity of the Wasserstein mean is in fact part of a broader phenomenon. The linearization problems have recently been investigated for other classes of matrix means (see \cite{KIM2026}). In particular, Gan, Kim, and Mer studied the weighted spectral geometric mean and obtained several characterizations of its linearity\,.

\noindent Note that the assumptions of the previous theorem cannot be completely removed. Indeed, if $f(t)=t$, then it follows from Ricatti's equation that
\begin{equation*}
\A \hat{\sigma}_{f} \B = \left(\A^{-1}\sharp\B\right)\A\left(\A^{-1}\sharp\B\right) = \B\,,
\end{equation*}
so the corresponding alternative mean is always linearizable, independently of the matrices $\A$ and $\B$. This naturally leads to the following question\,.

\medskip

\noindent \textbf{\underline{Question :}} Can we characterize all functions $f: \left(0\,, \infty\right) \to \left(0\,, \infty\right)$ such that $\A \hat{\sigma}_{f} \B$ is linearizable if and only if $\A\B=\B\A$ and $\left|\Spec(\A^{-1}\sharp\B)\right|\leq 2$\,?

\medskip

\noindent The paper is organized as follows. In Section 2, we recall the necessary background on Kubo-Ando means and alternative means. Section 3 is devoted to the proof of the Choi-Kim-Lim conjecture for the Wasserstein mean. In Section 4, we establish our general rigidity theorem and discuss several examples and applications\,.

\section{Preliminaries}

Let $\mathbb{D} \in \left\{\mathbb{R}\,, \mathbb{C}\,, \mathbb{H}\right\}$. We denote by $\Mat_{n}(\mathbb{D})$ the set of $n$ by $n$ matrices with entries in $\mathbb{D}$, and by $\GL_{n}(\mathbb{D})$ the set of invertible matrices in $\Mat_{n}(\mathbb{D})$. For a matrix $\A \in \Mat_{n}(\mathbb{D})$, we denote by $\A^{*}$ the matrix in $\Mat_{n}(\mathbb{D})$ given by
\begin{equation*}
\A^{*} = \overline{\A}^{t}\,.
\end{equation*}
Let $\U_{\mathbb{D}}$ and $\mathfrak{p}_{n}(\mathbb{D})$ the subsets of $\GL_{n}(\mathbb{D})$ and $\Mat_{n}(\mathbb{D})$ respectively given by
\begin{equation*}
\U_{\mathbb{D}} = \left\{g \in \GL_{n}(\mathbb{D})\,, gg^{*} = \Id_{n}\right\}\,, \qquad \mathfrak{p}_{n}(\mathbb{D}) := \left\{\X \in \Mat_{n}(\mathbb{D})\,, \X = \X^{*}\right\}\,.
\end{equation*}
We denote by $\mathscr{P}_{n}(\mathbb{D})$ the subset of $\mathfrak{p}_{n}(\mathbb{D})$ consisting of positive matrices, i.e.
\begin{equation*}
\mathscr{P}_{n}(\mathbb{D}) := \left\{\X \in \mathfrak{p}_{n}(\mathbb{D})\,, \X > 0\right\}\,,
\end{equation*}
and by $\preceq$ the Loewner order on $\mathscr{P}_{n}(\mathbb{D})$ (see \cite{BHATIA}).

\begin{remark}

\begin{enumerate}
\item It is well-known (see \cite{BHATIA} and \cite{ZHANG} for the case $\mathbb{D} = \mathbb{H}$) that the exponential map
\begin{equation*}
\exp: \mathfrak{p}_{n}(\mathbb{D}) \to \mathscr{P}_{n}(\mathbb{D})
\end{equation*}
is bijective and we denote by $\log: \mathscr{P}_{n}(\mathbb{D}) \to \mathfrak{p}_{n}(\mathbb{D})$ its inverse. For all $\X \in \mathscr{P}_{n}(\mathbb{D})$ and $t \in \mathbb{R}$, we denote by $\X^{t}$ the matrix in $\mathscr{P}_{n}(\mathbb{D})$ given by
\begin{equation*}
\X^{t} = \exp\left(t\log(\X)\right)\,.
\end{equation*}
\item (Spectral decomposition) Every matrix $\X \in \mathscr{P}_{n}(\mathbb{D})$ can be written as
\begin{equation*}
\X = \U\D\U^{*}\,,
\end{equation*}
with $\U \in \U_{\mathbb{D}}$ and $\D = \diag\left(d_{i}\right)$ a diagonal matrix is $\Mat_{n}(\mathbb{R})$, with $d_{i} > 0$. The positive scalar $d_{i}$ are the eigenvalues of $\X$, and in the case $\mathbb{D} = \mathbb{H}$, we mean right eigenvalues. For a matrix $\X \in \mathscr{P}_{n}(\mathbb{D})$, we denote by $\Spec(\X)$ the spectrum of $\X$, i.e. the set of eigenvalues of $\X$\,.
\item Note that for all $\X\,, \Y \in \mathscr{P}_{n}(\mathbb{D})$, we have $\Spec(\X\Y) = \Spec(\Y\X)$. Moreover, for all $\U \in \GL_{n}(\mathbb{D})$ and $\X \in \mathscr{P}_{n}(\mathbb{D})$, we get $\U\X\U^{*} \in \mathscr{P}_{n}(\mathbb{D})$. In particular, for all $\A\,, \B \in \mathscr{P}_{n}(\mathbb{D})$, the matrix $\A^{-\frac{1}{2}}\B\A^{-\frac{1}{2}} \in \mathscr{P}_{n}(\mathbb{D})$ and $\Spec(\A^{-\frac{1}{2}}\B\A^{-\frac{1}{2}}) = \Spec(\A^{-1}\B)$\,.
\end{enumerate}

\label{RemarkSpectralDecomposition}

\end{remark}

\noindent In \cite{KUBOANDO}, Kubo and Ando introduced an axiomatic theory of means on the cone $\mathscr{P}_{n}(\mathbb{D})$. More precisely, a Kubo-Ando mean is a continuous map 
\begin{equation*}
\sigma: \mathscr{P}_{n}(\mathbb{D}) \times \mathscr{P}_{n}(\mathbb{D}) \to \mathscr{P}_{n}(\mathbb{D})
\end{equation*}
such that 
\begin{enumerate}
\item If $\A \preceq \B$ and $\C \preceq \D$, then $\A \sigma \C \preceq \B \sigma \D$\,,
\item For all $\A\,, \B \in \mathscr{P}_{n}(\mathbb{D})$ and $\X \in \GL_{n}(\mathbb{D})$, we get 
\begin{equation*}
\X\left(\A\sigma\B\right)\X^{*} = \left(\X\A\X^{*}\right)\sigma\left(\X\B\X^{*}\right)\,,
\end{equation*}
\item $\Id_{n} \sigma \Id_{n} = \Id_{n}$\,.
\end{enumerate}
A fundamental result of their theory states that every mean is uniquely determined by an operator monotone function $f: \left(0\,, \infty\right) \to \left(0\,, \infty\right)$ through a functional calculus formula. More precisely, for a Kubo-Ando mean $\sigma$ on $\mathscr{P}_{n}(\mathbb{D})$, there exists an operator monotone function $f:\left(0\,, \infty\right) \to \left(0\,, \infty\right)$ with $f(1) = 1$ and such that
\begin{equation*}
\A \sigma \B = \A^{\frac{1}{2}}f\left(\A^{-\frac{1}{2}}\B\A^{-\frac{1}{2}}\right)\A^{\frac{1}{2}}\,, \qquad \left(\A\,, \B \in \mathscr{P}_{n}(\mathbb{D})\right)\,.
\end{equation*}

\begin{remark}

\begin{enumerate}
\item Let $\sigma$ be a Kubo-Ando mean on $\mathscr{P}_{n}(\mathbb{D})$. For two matrices $\A\,, \B \in \mathscr{P}_{n}(\mathbb{D})$, we say that $\A \sigma \B$ is linearizable if there exists $a\,, b \in \mathbb{R}$ such that
\begin{equation*}
\A \sigma \B = a\A + b\B\,.
\end{equation*}
In a recent paper (see \cite{RJA}), we proved that if the representing function $f$ is not affine, then $\A \sigma \B$ is linearizable if and only if $\left|\Spec(\A^{-1}\B)\right| \leq 2$. In particular, the linearization problem is well-understood for Kubo-Ando means.
\item We denote by $\sharp$ the geometric mean, i.e. the Kubo-ando mean corresponding to $f(x) = \sqrt{x}$. In particular, 
\begin{equation*}
\A \sharp \B = \A^{\frac{1}{2}}\left(\A^{-\frac{1}{2}}\B\A^{-\frac{1}{2}}\right)^{\frac{1}{2}}\A^{\frac{1}{2}}\,, \qquad \left(\A\,, \B \in \mathscr{P}_{n}(\mathbb{D})\right)\,.
\end{equation*}
The geometric has a nice characterization. Indeed, $\A \sharp \B$ is the unique solution of the Ricatti equation
\begin{equation*}
\X\A^{-1}\X = \B\,.
\end{equation*}
In particular, for all $\A\,, \B \in \mathscr{P}_{n}(\mathbb{D})$, we have
\begin{equation}
\left(\A^{-1} \sharp \B\right)\A\left(\A^{-1} \sharp \B\right) = \B\,.
\label{EquationRicatti}
\end{equation}
The equation \eqref{EquationRicatti} will play an important role in the next sections\,.
\end{enumerate}

\label{RemarkRicatti}

\end{remark}

\noindent In this paper, we study the linearization problem for alternative means. 

\begin{definition}

Let $f: \left(0\,, \infty\right) \rightarrow \left(0\,, \infty\right)$ be a continuous function such that $f(1)=1$. The alternative mean associated with $f$ is the binary operation
\begin{equation*}
\hat{\sigma}_{f}: \mathscr{P}_{n}(\mathbb{D}) \times \mathscr{P}_{n}(\mathbb{D}) \to \mathscr{P}_{n}(\mathbb{D})
\end{equation*}
defined by
\begin{equation*}
\A \hat{\sigma}_{f} \B = f\left(\A^{-1} \sharp \B\right)\A f\left(\A^{-1}\sharp \B\right)\,, \qquad \left(\A\,, \B \in \mathscr{P}_{n}(\mathbb{D})\right)\,.
\end{equation*}

\end{definition}

\noindent We now recall a result of \cite{RJA}\,.

\begin{theo}

Let $f: \left(0\,, \infty\right) \rightarrow \left(0\,, \infty\right)$ be a continuous function such that $f(1)=1$. Then for all $\A\,, \B \in \mathscr{P}_{n}(\mathbb{D})$ such that $\A\B = \B\A$ and $\left|\Spec(\A^{-1}\B)\right| \leq 2$, $\A \hat{\sigma}_{f} \B$ is linearizable\,.

\label{TheoremRFA1}

\end{theo}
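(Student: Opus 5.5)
Since $\A$ and $\B$ commute, the plan is to diagonalize them simultaneously and reduce everything to scalar identities. All matrices involved are positive, so $\A\B=\B\A$ yields a common unitary $\U \in \U_{\mathbb{D}}$ with $\A = \U\D_{\A}\U^{*}$ and $\B = \U\D_{\B}\U^{*}$, where $\D_{\A} = \diag(\alpha_{i})$ and $\D_{\B} = \diag(\beta_{i})$ have positive real entries. For $\mathbb{D} = \mathbb{H}$ this uses the standard fact that commuting Hermitian quaternionic matrices are simultaneously unitarily diagonalizable with real eigenvalues (see \cite{ZHANG}). The key point is that every operation in the definition of $\hat{\sigma}_{f}$ is unitarily covariant. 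Indeed, $(\U\X\U^{*})^{-1} = \U\X^{-1}\U^{*}$, the geometric mean satisfies $\U(\X\sharp\Y)\U^{*} = (\U\X\U^{*})\sharp(\U\Y\U^{*})$ by the Kubo--Ando transformer property, and the continuous functional calculus satisfies $f(\U\X\U^{*}) = \U f(\X)\U^{*}$. It follows that $\A\hat{\sigma}_{f}\B = \U\left(\D_{\A}\hat{\sigma}_{f}\D_{\B}\right)\U^{*}$, so it suffices to treat the diagonal case.

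In the diagonal case everything is entrywise. Here $\D_{\A}^{-1}\sharp\D_{\B} = \diag\left(\sqrt{\beta_{i}/\alpha_{i}}\right)$, and therefore
\begin{equation*}
\D_{\A}\hat{\sigma}_{f}\D_{\B} = \diag\left(f\left(\sqrt{\beta_{i}/\alpha_{i}}\right)^{2}\alpha_{i}\right)\,.
\end{equation*}
Set $\lambda_{i} = \beta_{i}/\alpha_{i}$; these are exactly the eigenvalues of $\A^{-1}\B$. We then need $a, b \in \mathbb{R}$ such that $f(\sqrt{\lambda_{i}})^{2}\alpha_{i} = a\alpha_{i} + b\beta_{i}$ for every $i$. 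Dividing by $\alpha_{i} > 0$, this is equivalent to
\begin{equation*}
f\left(\sqrt{\lambda_{i}}\right)^{2} = a + b\lambda_{i} \qquad \text{for all } i\,.
\end{equation*}

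Now the hypothesis $\left|\Spec(\A^{-1}\B)\right| \leq 2$ says the $\lambda_{i}$ take at most two distinct values, say $\mu_{1}$ and $\mu_{2}$. If $\mu_{1} \neq \mu_{2}$, we need the line $t \mapsto a + bt$ to pass through the two points $\left(\mu_{1}, f(\sqrt{\mu_{1}})^{2}\right)$ and $\left(\mu_{2}, f(\sqrt{\mu_{2}})^{2}\right)$. This is a $2\times 2$ Vandermonde system with nonzero determinant $\mu_{2} - \mu_{1}$, so it has a unique solution $(a, b)$. If there is only one value $\mu$, take $b = 0$ and $a = f(\sqrt{\mu})^{2}$. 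Conjugating back by $\U$ gives $\A\hat{\sigma}_{f}\B = a\A + b\B$.

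No step is a real obstacle. The only point needing care is the simultaneous diagonalization and unitary covariance in the quaternionic setting, which reduces to the standard spectral theory recalled in Remark \ref{RemarkSpectralDecomposition}.
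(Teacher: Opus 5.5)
Your proof is correct and is essentially the argument the paper has in mind. The paper defers the proof to the authors' earlier work, but its commutative-case remark records the identity $\A\hat{\sigma}_{f}\B = \A f\left((\A^{-1}\B)^{\frac{1}{2}}\right)^{2}$, which you derive in diagonal coordinates; affine interpolation of $t \mapsto f(\sqrt{t})^{2}$ on the at most two eigenvalues of $\A^{-1}\B$ then gives $a\A + b\B$. Interpolating $t \mapsto f(\sqrt{t})^{2}$ directly on $\Spec(\A^{-1}\B)$, instead of interpolating $f$ on $\Spec(\A^{-1}\sharp\B)$ through the decomposition $\sum d_{i}d_{j}\Y^{i}\A\Y^{j}$ recalled in the paper, spares you from rewriting the cross term $\A\Y + \Y\A$ in terms of $\A$ and $\B$.
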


\noindent The proof of Theorem \ref{TheoremRFA1} can be found in \cite{CHOIKIMLIM} for the Wasserstein mean (i.e. for $f(t) = \frac{1+t}{2}$). In \cite{CHOIKIMLIM}, the authors conjectured that for the Wasserstein mean, the implication given in Theorem \ref{TheoremRFA1} is in fact an equivalence . In this paper, we prove this conjecture and show that such a statement remains valid for a large family of alternative means\,.

\begin{remark}

\begin{enumerate}
\item In \cite{RJA}, we obtained an explicit decomposition of any alternative mean. Indeed, let $f: \left(0\,, \infty\right) \rightarrow \left(0\,, \infty\right)$ be a continuous function such that $f(1)=1$ and let $\A\,, \B \in \mathscr{P}_{n}(\mathbb{D})$ such that $\left|\Spec(\A^{-1} \sharp \B)\right| = r$, then 
\begin{equation*}
\A \hat{\sigma}_{f} \B = \sum\limits_{i = 0}^{r-1}\sum\limits_{j = 0}^{r-1} d_{i}d_{j} \left(\A^{-1} \sharp \B\right)^{i}\A\left(\A^{-1} \sharp \B\right)^{j}\,,
\end{equation*}
where the coefficients $d_{i}$ only depends on $f$ and the eigenvalues of $\A^{-1} \sharp \B$. However, we don't have, in general, an easy description of $\left(\A^{-1} \sharp \B\right)^{i}$\,.
\item The commutative case is way easier to describe. Indeed, if $\A\B = \B\A$, then 
\begin{equation*}
\A^{-1} \sharp \B = \left(\A^{-1}\B\right)^{\frac{1}{2}}\,.
\end{equation*}
Moreover, it follows from functional calculus that
\begin{equation*}
\A \hat{\sigma}_{f} \B = \A f\left(\left(\A^{-1}\B\right)^{\frac{1}{2}}\right)^{2}\,,
\end{equation*}
so if $f$ if operator monotone, then $\A \hat{\sigma}_{f} \B$ is $\A \sigma_{g} \B$, where $\sigma_{g}$ is the Kubo-Ando mean with representing function $g(t) = f(\sqrt{t})^{2}$\,.
\end{enumerate}

\end{remark}

\section{Linearization of the Wasserstein mean}

\label{SectionThree}

In this section, we take $f(t) = \frac{1+t}{2}$. We denote by $\diamond$ the corresponding alternative mean, i.e.
\begin{equation*}
\A \diamond \B = \frac{1}{4}\left(\Id_{n} + \A^{-1} \sharp \B\right)\A\left(\Id_{n} + \A^{-1} \sharp \B\right)\,.
\end{equation*}

\noindent The following theorem was conjectured in \cite{CHOIKIMLIM}\,.

\begin{theo}

Let $\A\,, \B \in \mathscr{P}_{n}(\mathbb{D})$. Assume that $\A \diamond \B$ is linearizable. Then $\left|\Spec(\A^{-1} \sharp \B)\right| \leq 2$ and $\A\B = \B\A$\,.

\label{TheoremConjectureCKL}

\end{theo}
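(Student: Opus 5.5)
Write $\C = \A^{-1}\sharp\B \in \mathscr{P}_{n}(\mathbb{D})$. The plan is to turn linearizability into a Sylvester-type identity in $\C$ and $\A$ and then read it off entrywise in an eigenbasis of $\C$. By the Ricatti identity \eqref{EquationRicatti}, $\C\A\C = \B$, so expanding the definition gives
\begin{equation*}
\A\diamond\B = \frac{1}{4}\left(\A + \C\A + \A\C + \C\A\C\right)\,.
\end{equation*}
Suppose $\A\diamond\B = a\A + b\B$. Substituting $\B = \C\A\C$ turns this into
\begin{equation*}
\C\A + \A\C = \alpha\A + \beta\,\C\A\C\,, \qquad \alpha = 4a-1\,,\quad \beta = 4b-1\,.
\end{equation*}

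Next I diagonalize $\C$ using Remark \ref{RemarkSpectralDecomposition}: $\C = \U\D\U^{*}$ with $\U \in \U_{\mathbb{D}}$ and $\D = \diag(d_{i})$, $d_{i} > 0$ real. Set $\A' = \U^{*}\A\U \in \mathscr{P}_{n}(\mathbb{D})$. Conjugating the identity by $\U$ gives $\D\A' + \A'\D = \alpha\A' + \beta\D\A'\D$. Because the $d_{i}$ are real scalars, they commute with the entries even when $\mathbb{D} = \mathbb{H}$. Entrywise this reads
\begin{equation*}
(d_{i} + d_{j})\,a'_{ij} = (\alpha + \beta d_{i}d_{j})\,a'_{ij}\qquad\text{for all } i,j\,.
\end{equation*}

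Diagonal entries: since $\A'$ is positive definite, $a'_{ii} > 0$. Hence every $d_{i}$ is a root of $\beta x^{2} - 2x + \alpha = 0$. This polynomial is nonzero because of the $-2x$ term, so it has at most two roots. Therefore $\left|\Spec(\C)\right| \leq 2$.

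Off-diagonal entries: this is the only step requiring care. Take $d_{i} \neq d_{j}$. Then both are roots, so $\beta \neq 0$, and Vieta gives $d_{i} + d_{j} = 2/\beta$ and $d_{i}d_{j} = \alpha/\beta$. Hence $\alpha + \beta d_{i}d_{j} = 2\alpha$. The coefficient $d_{i} + d_{j} - 2\alpha$ vanishes exactly when $\alpha\beta = 1$. But distinct real roots force the discriminant $4 - 4\alpha\beta > 0$, so $\alpha\beta < 1$ and therefore $a'_{ij} = 0$.

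Consequently $\A'$ is block diagonal with respect to the eigenspaces of $\D$, so $\A'\D = \D\A'$, i.e. $\A\C = \C\A$. Then $\B = \C\A\C = \A\C^{2}$, which commutes with $\A$, so $\A\B = \B\A$. Finally $\A^{-1}\B = \C^{2}$, and squaring positive eigenvalues is injective, so $\left|\Spec(\A^{-1}\B)\right| = \left|\Spec(\C)\right| \leq 2$. This gives the bound for both $\A^{-1}\sharp\B$ and $\A^{-1}\B$, completing the plan.
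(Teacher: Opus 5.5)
Your proof is correct and follows essentially the same route as the paper. You use the Ricatti identity to reduce linearizability to $\alpha\A + \beta\Y\A\Y = \A\Y + \Y\A$ with $\Y = \A^{-1}\sharp\B$, diagonalize $\Y$, and use $a'_{ii} > 0$ to make every eigenvalue a root of $\beta t^{2} - 2t + \alpha$. You then kill the off-diagonal entries between distinct eigenvalues via Vieta and the discriminant, since a vanishing coefficient would force $\alpha\beta = 1$, i.e. a double root. Two small refinements over the paper are welcome: you compute the coefficient directly as $\frac{2}{\beta}(1-\alpha\beta)$ rather than arguing by contradiction, and you note that $\A^{-1}\B = \Y^{2}$, so the bound also holds for $\Spec(\A^{-1}\B)$ as in the original form of the conjecture.
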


\begin{proof}

Let $\Y = \A^{-1} \sharp \B$. Using that $f(t) = \frac{1+t}{2}$, we get
\begin{equation*}
\A \diamond \B = \frac{1}{4}\left(\Id_{n} + \Y\right)\A\left(\Id_{n} + \Y\right) = \frac{1}{4}\left(\A + \A\Y + \Y\A + \Y\A\Y\right)\,.
\end{equation*}
By assumption, there exists $a\,, b \in \mathbb{R}$ such that 
\begin{equation*}
\A \diamond \B = a\A + b\B\,.
\end{equation*}
Moreover, using that $\Y$ is the unique solution of $\Y\A\Y = \B$ (see Equation \eqref{EquationRicatti}), it follows that 
\begin{equation*}
4a\A + 4b\Y\A\Y = \A + \A\Y + \Y\A + \Y\A\Y\,,
\end{equation*}
i.e.
\begin{equation*}
\alpha\A + \beta\Y\A\Y = \A\Y + \Y\A\,.
\end{equation*}
with $\alpha = 4a-1$ and $\beta = 4b-1$. Using that $\Y \in \mathscr{P}_{n}(\mathbb{D})$, it follows from Remark \ref{RemarkSpectralDecomposition} that there exists $\U \in \U_{\mathbb{D}}$ and $\D = \diag\left(d_{1}\,, \ldots\,, d_{n}\right) \in \GL_{n}(\mathbb{R})$ such that $\Y = \U\D\U^{*}$. Let $\A' = \U^{*}\A\U \in \mathscr{P}_{n}(\mathbb{D})$. We have
\begin{equation*}
\alpha\A + \beta\Y\A\Y = \alpha\U\A'\U^{*} + \beta\U\D\U^{*}\U\A'\U^{*}\U\D\U^{*} = \U\left(\alpha\A' + \beta \D\A'\D\right)\U^{*}
\end{equation*}
and
\begin{equation*}
\A\Y + \Y\A = \U\A'\U^{*}\U\D\U^{*} + \U\D\U^{*}\U\A'\U^{*} = \U\left(\A'\D + \D\A'\right)\U^{*}\,,
\end{equation*}
i.e.
\begin{equation*}
\alpha\A' + \beta \D\A'\D = \A'\D + \D\A'\,.
\end{equation*}
For all $1 \leq i\,, j \leq n$, we have
\begin{equation*}
\left(\alpha\A' + \beta \D\A'\D\right)_{i,j} = \alpha a'_{i,j} + \beta d_{i}d_{j}a'_{i,j} = \left(\alpha + \beta d_{i}d_{j}\right)a'_{i,j}
\end{equation*}
and 
\begin{equation*}
\left(\A'\D + \D\A'\right)_{i,j} = a'_{i,j}d_{i} + d_{j}a'_{i,j} = \left(d_{i}+d_{j}\right)a'_{i,j}\,,
\end{equation*}
i.e.
\begin{equation}
\left(d_{i}+d_{j}-\alpha-\beta d_{i}d_{j}\right)a'_{i,j} = 0\,, \qquad \left(1 \leq i\,, j \leq n\right)\,.
\label{EquationSpectrum}
\end{equation}
Using that $\A' > 0$, it follows that for all $1 \leq i \leq n$, $e^{*}_{i}\A'e_{i} = a'_{i,i} > 0$. In particular, we get from Equation \eqref{EquationSpectrum} that
\begin{equation*}
2d_{i} - \alpha - \beta d^{2}_{i} = 0\,, \qquad \left(1 \leq i \leq n\right)\,.
\end{equation*}
Therefore, every eigenvalue of $\Y$ is a solution of $\beta t^{2} - 2t + \alpha = 0$, i.e.
\begin{equation*}
\left|\Spec(\A^{-1} \sharp \B)\right| \leq 2\,.
\end{equation*}
We now prove the commutativity. We distinguish two cases\,.
\begin{enumerate}
\item If $\left|\Spec(\A^{-1} \sharp \B)\right| = 1$, then $\A^{-1} \sharp \B \in \mathbb{R}^{*}_{+}\Id_{n}$, and therefore $\B$ is a multiple of $\A$, i.e. $\A\B = \B\A$\,.
\item If $\left|\Spec(\A^{-1} \sharp \B)\right| = 2$, then $\beta \neq 0$. In particular, we can assume that $\D = \diag\left(d_{1}\Id_{n_{1}}\,, d_{2}\Id_{n_{2}}\right)$, with $n_{1}\,, n_{2} \in \mathbb{N}$ such that $n_{1} + n_{2} = n$. The goal is to prove that for all $1 \leq i \leq n_{1}$ and $n_{1}+1 \leq j \leq n$, $a'_{i,j} = a'_{j,i} = 0$. Suppose that there exists $1 \leq i \leq n_{1}$ and $n_{1}+1 \leq j \leq n$ such that $a'_{i,j} \neq 0$ (the case $a'_{j,i} \neq 0$ is similar). Therefore, it follows from Equation \eqref{EquationSpectrum} that
\begin{equation}
d_{1}+d_{2}-\alpha-\beta d_{1}d_{2} = 0\,.
\label{EquationD1D2}
\end{equation}
By definition, $d_{1}$ and  $d_{2}$ are the two solutions of 
\begin{equation*}
\beta t^{2} - 2t + \alpha = 0\,,
\end{equation*}
or equivalently, using that $\beta \neq 0$,
\begin{equation}
t^{2} - \frac{2}{\beta}t + \frac{\alpha}{\beta} = 0\,.
\label{EquationDegree2}
\end{equation}
In particular, we get
\begin{equation}
d_{1} + d_{2} = \frac{2}{\beta}\,, \qquad \frac{\alpha}{\beta} = d_{1}d_{2}\,.
\label{EquationD1D2V2}
\end{equation}
Thus, it follows from Equations \eqref{EquationD1D2} and  \eqref{EquationD1D2V2} that
\begin{equation*}
\frac{2}{\beta} - \alpha - \beta\frac{\alpha}{\beta} = 0\,,
\end{equation*}
i.e.
\begin{equation*}
\alpha\beta = 1\,.
\end{equation*}
The discriminant $\Delta$ of the quadratic polynomial given in \eqref{EquationDegree2} is
\begin{equation*}
\Delta = \frac{4}{\beta^{2}} - 4\frac{\alpha}{\beta} = \frac{4}{\beta^{2}}\left(1 - \alpha\beta\right)\,,
\end{equation*} 
so $\Delta = 0$, contradicting $\left|\Spec(\A^{-1} \sharp \B)\right| = 2$. Therefore, it follows that the matrix $\A'$ is of the form 
\begin{equation*}
\A' = \begin{pmatrix} \Omega_{1} & 0 \\ 0 & \Omega_{2} \end{pmatrix}\,, \qquad \left(\Omega_{1} \in \Mat_{n_{1}}(\mathbb{D})\,, \Omega_{2} \in \Mat_{n_{2}}(\mathbb{D})\right)\,.
\end{equation*}
so $\D\A' = \A'\D$. Finally, using that
\begin{equation*}
\D\A' = \D\U^{*}\A\U = \U^{*}\left(\U\D\U^{*}\right)\A\U = \U^{*}\Y\A\U\,,
\end{equation*}
and 
\begin{equation*}
\A'\D = \U^{*}\A\U\D = \U^{*}\A\left(\U\D\U^{*}\right)\U = \U^{*}\A\Y\U\,,
\end{equation*}
we get that $\A\Y = \Y\A$, and then
\begin{equation*}
\A\B = \A\Y\A\Y = \left(\A\Y\right)\left(\A\Y\right) = \left(\Y\A\right)\left(\Y\A\right) = \left(\Y\A\Y\right)\A = \B\A\,,
\end{equation*}
i.e. $\A$ and $\B$ commute.
\end{enumerate}

\end{proof}

\noindent The following theorem follows from Theorems \ref{TheoremRFA1} and \ref{TheoremConjectureCKL}\,.

\begin{theo}

Let $\A\,, \B \in \mathscr{P}_{n}(\mathbb{D})$. Then $\A \diamond \B$ is linearizable if and only if $\A\B = \B\A$ and $\left|\Spec(\A^{-1} \sharp \B)\right| \leq 2$\,.

\end{theo}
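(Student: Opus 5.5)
The statement is an equivalence, and both directions are already available; the only work is to reconcile the spectral conditions. For the ``if'' direction, assume $\A\B = \B\A$ and $\left|\Spec(\A^{-1}\sharp\B)\right| \leq 2$. In the commutative case we have $\A^{-1}\sharp\B = \left(\A^{-1}\B\right)^{\frac{1}{2}}$, as recalled in the Remark following Theorem \ref{TheoremRFA1}. Since $t \mapsto \sqrt{t}$ is injective on $\left(0\,, \infty\right)$, this gives
\begin{equation*}
\left|\Spec(\A^{-1}\sharp\B)\right| = \left|\Spec(\A^{-1}\B)\right| \leq 2\,.
\end{equation*}
Theorem \ref{TheoremRFA1}, applied with $f(t) = \frac{1+t}{2}$, then shows that $\A \diamond \B$ is linearizable.

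For the ``only if'' direction, assume $\A \diamond \B$ is linearizable. Theorem \ref{TheoremConjectureCKL} gives directly both $\A\B = \B\A$ and $\left|\Spec(\A^{-1}\sharp\B)\right| \leq 2$, which is exactly the required conclusion.

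There is no real obstacle here. The one point needing care is the spectral bookkeeping: Theorem \ref{TheoremRFA1} is phrased in terms of $\Spec(\A^{-1}\B)$, while the statement uses $\Spec(\A^{-1}\sharp\B)$. The two cardinalities agree only once commutativity is known, and commutativity is a hypothesis in one direction and a conclusion in the other. So the identity $\A^{-1}\sharp\B = (\A^{-1}\B)^{1/2}$ should be invoked only after $\A\B=\B\A$ is in hand, and I would state this explicitly in the write-up.

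For completeness I would justify that identity. When $\A$ and $\B$ commute, they are simultaneously unitarily diagonalizable, and every expression in the formula for $\sharp$ is a functional calculus of commuting diagonal matrices. The identity then reduces to the scalar identity $a^{\frac{1}{2}}\left(a\,b\right)^{\frac{1}{2}}a^{\frac{1}{2}}$ with $a$ replaced by $a^{-1}$, which gives $(a^{-1}b)^{\frac{1}{2}}$.
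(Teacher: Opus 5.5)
Your proposal is correct and matches the paper, which simply observes that the equivalence follows from Theorems \ref{TheoremRFA1} and \ref{TheoremConjectureCKL}; your explicit reconciliation of $\left|\Spec(\A^{-1}\B)\right|$ with $\left|\Spec(\A^{-1}\sharp\B)\right|$ via $\A^{-1}\sharp\B = (\A^{-1}\B)^{\frac{1}{2}}$ under commutativity fills in a detail the paper leaves implicit, and you handle it correctly. One small slip: the scalar computation should read $a^{-\frac{1}{2}}(ab)^{\frac{1}{2}}a^{-\frac{1}{2}} = (a^{-1}b)^{\frac{1}{2}}$, whereas substituting $a \mapsto a^{-1}$ into $a^{\frac{1}{2}}(ab)^{\frac{1}{2}}a^{\frac{1}{2}}$ as you literally wrote it does not give $(a^{-1}b)^{\frac{1}{2}}$.
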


\noindent The goal of the next section is to extend the results of this section to a large family of alternative means\,.

\section{A general result on the linearization of $\A \hat{\sigma}_{f} \B$}

Let $f: \left(0\,, \infty\right) \to \left(0\,, \infty\right)$ be an operator monotone function, and let $\hat{\sigma}_{f}$ be the corresponding alternative mean, i.e.
\begin{equation*}
\A \hat{\sigma}_{f} \B = f(\A^{-1} \sharp \B) \A f(\A^{-1} \sharp \B)\,.
\end{equation*}
We keep the notations of Section \ref{SectionThree}. Let $\Y = \A^{-1} \sharp \B$. Again, using Remark \ref{RemarkSpectralDecomposition}, we write $\Y$ as $\Y = \U\D\U^{*}$, with $\U \in \U_{\mathbb{D}}$, and $\D = \diag\left(d_{1}\,, \ldots\,, d_{n}\right)$, with $d_{i} > 0$, and let $\A' = \U^{*}\A\U$. By functional calculus, we have $f(\Y) = \U f(\D)\U^{*}$, i.e.
\begin{equation}
f(\Y)\A f(\Y) = \U f(\D)\U^{*}\A\U f(\D)\U^{*} = \U\left(f(\D)\A'f(\D)\right)\U^{*}\,.
\label{ExtensionOne}
\end{equation}
Assume that $\A \hat{\sigma}_{f} \B$ is linearizable, i.e. $\A \hat{\sigma}_{f} \B = a\A + b\B$. Using that $\B = \Y\A\Y$ (see Equation \eqref{EquationRicatti}), we get that 
\begin{equation}
a\A + b\B = a\U\A'\U^{*} + b\U\D\U^{*}\A\U\D\U^{*} = \U\left(a\A' + b\D\A'\D\right)\U^{*}\,.
\label{ExtensionTwo}
\end{equation}
In particular, it follows from Equations \eqref{ExtensionOne} and \eqref{ExtensionTwo} that 
\begin{equation*}
f(\D)\A'f(\D) = a\A' + b\D\A'\D\,.
\end{equation*}
For all $1 \leq i\,, j \leq n$, we have 
\begin{equation*}
\left(f(\D)\A'f(\D)\right)_{i,j} = \left(a\A' + b\D\A'\D\right)_{i,j}
\end{equation*}
i.e.
\begin{equation}
a'_{i,j}\left(f(d_{i})f(d_{j}) - a - bd_{i}d_{j}\right) = 0\,.
\label{KeyEquationForAPrime}
\end{equation}
Moreover, using that $\A' \in \mathscr{P}_{n}(\mathbb{D})$, we get $a'_{i,i} > 0$. i.e.
\begin{equation}
f(d_{i})^{2} - a - bd^{2}_{i} = 0\,, \qquad \left(1 \leq i \leq n\right)\,.
\label{EigenvaluesFD}
\end{equation}
In other words, every $\lambda \in \Spec(\Y)$ satisfies
\begin{equation*}
f(\lambda)^{2} = a + b\lambda^{2}\,.
\end{equation*}

\begin{remark}

\begin{enumerate}
\item Let $h: \left(0\,, \infty\right) \to \left(0\,, \infty\right)$ be an operator monotone function. Then there exists $\delta\,, \gamma \in \mathbb{R}_{+}$ such that 
\begin{equation*}
h(x) = \delta + \gamma x + \displaystyle\int_{0}^{\infty} \frac{\lambda x}{x+\lambda}d\mu(\lambda)\,,
\end{equation*}
where $\mu$ is a positive measure on $\left(0\,, \infty\right)$ (see \cite[Theorem~4.41]{HiaiPetz2014}). In particular, $h$ is affine if and only if $\mu = 0$, and if $\mu$ is not $0$, we have 
\begin{equation*}
h'(x) = \gamma + \displaystyle\int_{0}^{\infty} \frac{\lambda^{2}}{(x+\lambda)^{2}} d\mu(\lambda) > 0
\end{equation*}
and
\begin{equation*}
h''(x) = -\displaystyle\int_{0}^{\infty}\frac{\lambda^{2}}{(x+\lambda)^{3}}d\mu(\lambda) < 0\,,
\end{equation*}
i.e. $h$ is strictly increasing and strictly concave.

\noindent Now, let $f: \left(0\,, \infty\right) \to \left(0\,, \infty\right)$ be the function given by
\begin{equation*}
f(x) = \sqrt{h(x)}\,, \qquad \left(x \in \left(0\,, \infty\right)\right)\,.
\end{equation*}
Using that the map $x \to \sqrt{x}$ is operator monotone, it follows that $f$ is operator monotone. Moreover, if $h$ is not affine, then $h$ is strictly concave, i.e. $f^{2}$ is strictly concave\,.
\item Now let $f: \left(0\,, \infty\right) \to \left(0\,, \infty\right)$ be an operator monotone function. Then the transpose $f^{\sharp}$ of $f$ given by
\begin{equation*}
f^{\sharp}(x) = xf(x^{-1})
\end{equation*}
is operator monotone. Therefore
\begin{equation*}
x \to \frac{f(x)}{x}\,, \qquad \left(x \in \left(0\,, +\infty\right)\right)\,,
\end{equation*}
is operator monotone decreasing\,. 
\end{enumerate}

\label{RemarkOperatorMonotone}

\end{remark}

\begin{theo}

Let $h: \left(0\,, \infty\right) \to \left(0\,, \infty\right)$ be a non-affine  operator monotone function such that $h(1) = 1$, and let $f(x) = \sqrt{h(x)}$ as in Remark \ref{RemarkOperatorMonotone}. If $\A \hat{\sigma}_{f} \B$ is linearizable, then $\left|\Spec(\A^{-1} \sharp \B)\right| \leq 2$ and $\A\B = \B\A$\,.

\label{TheoremSquareRootH}

\end{theo}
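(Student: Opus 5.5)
We follow the structure of the proof of Theorem \ref{TheoremConjectureCKL}, using the reduction already carried out before Remark \ref{RemarkOperatorMonotone}. Write $\Y = \A^{-1}\sharp\B = \U\D\U^{*}$ and $\A' = \U^{*}\A\U$. Linearizability gives Equation \eqref{KeyEquationForAPrime} for all $i,j$. Since $f^{2} = h$, Equation \eqref{EigenvaluesFD} becomes
\begin{equation*}
h(d_{i}) = a + b d_{i}^{2}\,, \qquad \left(1 \leq i \leq n\right)\,.
\end{equation*}

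\textbf{Step 1: spectrum.} We show that the function $\phi(t) = h(t) - a - bt^{2}$ has at most two zeros on $(0,\infty)$. If $b \geq 0$, then $\phi'' = h'' - 2b < 0$ by Remark \ref{RemarkOperatorMonotone}, because $h$ is non-affine. So $\phi$ is strictly concave and has at most two zeros. If $b < 0$, we would instead use the substitution $s = t^{2}$. Then $h(\sqrt{s})$ is operator monotone, hence concave, and $a + bs$ is affine, so the difference is again strictly concave in $s$ and has at most two zeros. This case needs checking that $h(\sqrt{s})$ is strictly concave. The composition of operator monotone functions is operator monotone and non-affine, and Remark \ref{RemarkOperatorMonotone} applies. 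In fact the substitution $s=t^{2}$ handles both signs of $b$ at once, so we would simply use it throughout. Either way $\left|\Spec(\Y)\right| \leq 2$. Since $\Spec(\Y) = \Spec(\A^{-1}\sharp\B)$, this gives the first claim.

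\textbf{Step 2: commutativity.} If $\left|\Spec(\Y)\right| = 1$, then $\Y$ is scalar, so $\B = \Y\A\Y$ is a multiple of $\A$. Otherwise, reorder so that $\D = \diag(d_{1}\Id_{n_{1}}, d_{2}\Id_{n_{2}})$ with $d_{1} \neq d_{2}$. Suppose some off-diagonal block entry $a'_{i,j} \neq 0$. Then Equation \eqref{KeyEquationForAPrime} gives
\begin{equation*}
f(d_{1})f(d_{2}) = a + b d_{1}d_{2}\,.
\end{equation*}
Combining this with $f(d_{k})^{2} = a + bd_{k}^{2}$ for $k = 1,2$, we get
\begin{equation*}
\left(f(d_{1}) - f(d_{2})\right)^{2} = f(d_{1})^{2} + f(d_{2})^{2} - 2f(d_{1})f(d_{2}) = b\left(d_{1} - d_{2}\right)^{2}\,.
\end{equation*}
So $b \geq 0$ and $f(d_{1}) - f(d_{2}) = \pm\sqrt{b}\,(d_{1} - d_{2})$. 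Since $f$ is increasing, the sign is $+$. Setting $c = f(d_{1}) - \sqrt{b}\,d_{1} = f(d_{2}) - \sqrt{b}\,d_{2}$, we get $c^{2} = a$.

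\textbf{Step 3: the main obstacle.} The remaining task is to turn Step 2 into a contradiction; this is the step we expect to be hardest. The relations of Step 2 say that $h = f^{2}$ agrees with the function $g(t) = (c + \sqrt{b}\,t)^{2}$ at $d_{1}$ and $d_{2}$. We would try to derive the contradiction as follows. Since $h$ is strictly concave and $g$ is convex, the function $h - g$ is strictly concave. It vanishes at $d_{1}$ and $d_{2}$, so it is strictly positive between them. On the other hand, $h = a + bt^{2} = c^{2} + bt^{2}$ at the $d_{k}$. If $c > 0$, then $g \geq c^{2} + bt^{2}$ on $(0,\infty)$, since $g(t) - c^{2} - bt^{2} = 2c\sqrt{b}\,t \geq 0$. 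Equality at a $d_{k}$ then forces $b = 0$ or $c = 0$. If $b = 0$, then $f(d_{1}) = f(d_{2})$, contradicting strict monotonicity of $f$. If $c = 0$, then $a = 0$ and $h(d_{k}) = b d_{k}^{2}$, so $h(t)/t^{2}$ takes the same value $b$ at $d_{1}$ and $d_{2}$. But $h(t)/t$ is decreasing by Remark \ref{RemarkOperatorMonotone}(2), so $h(t)/t^{2}$ is strictly decreasing, a contradiction. The delicate part is the case $c < 0$, where we would use the concavity argument for $h - g$ more carefully. Once the contradiction is reached, $\A'$ is block diagonal, so $\D\A' = \A'\D$. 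Exactly as in Theorem \ref{TheoremConjectureCKL}, this gives $\A\Y = \Y\A$ and then $\A\B = \B\A$.
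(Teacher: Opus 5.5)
Your proof is incomplete at exactly the point you flag. The case $c<0$ in Step 3 is left open, and the tool you propose for it cannot close it: strict concavity of $h-g$ is compatible with $h-g$ vanishing at $d_1\neq d_2$ and being positive between them, so it yields no contradiction.

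The missing ingredient is an algebraic identity already hidden in your own computations. First, your assertion $c^2=a$ in Step 2 is not derived. Expanding $f(d_k)^2=(c+\sqrt{b}\,d_k)^2=a+bd_k^2$ gives $c^2+2c\sqrt{b}\,d_k=a$ for $k=1,2$. Subtracting the two yields $c\sqrt{b}\,(d_1-d_2)=0$, hence $c\sqrt{b}=0$, and only then $c^2=a$. Second, your ``$c>0$'' argument in Step 3 is really an equality at the points $d_k$, namely $g(d_k)-c^2-bd_k^2=2c\sqrt{b}\,d_k=0$, not an inequality on $(0,\infty)$. So it never uses the sign of $c$.

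Since $a=c^2$, the relation $c\sqrt{b}=0$ is exactly the paper's identity $ab(d_1-d_2)^2=0$. The paper gets it by comparing $f(d_1)^2f(d_2)^2$ with $\bigl(f(d_1)f(d_2)\bigr)^2$. With it, the case $c<0$ cannot occur: $c\neq 0$ forces $b=0$, and then $c=f(d_1)>0$. Your two subcases $b=0$ and $c=0$ then finish the proof. Your handling of $c=0$, via the strict decrease of $h(t)/t^2=\bigl(h(t)/t\bigr)\cdot t^{-1}$, is a clean replacement for the paper's injectivity of $f(x)/x$.

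A smaller point in Step 1: the claim that $s\mapsto h(\sqrt{s})$ is non-affine needs a line of proof. If $h(\sqrt{s})=\alpha+\beta s$, then $h(t)=\alpha+\beta t^2$. Concavity of $h$ forces $\beta\leq 0$ and positivity forces $\beta\geq 0$, so $h$ would be constant, contradicting that $h$ is non-affine. With that fixed, the substitution $s=t^2$ does treat all signs of $b$ at once. The paper handles $b<0$ more simply, by noting that $h$ is increasing while $t\mapsto a+bt^2$ is decreasing.
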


\begin{proof}

By assumption, there exists $a\,, b \in \mathbb{R}$ such that
\begin{equation*}
\A \hat{\sigma}_{f} \B = a\A + b\B\,.
\end{equation*}

\noindent We first prove that $\left|\Spec(\A^{-1} \sharp \B)\right| \leq 2$. As explained in Equation \eqref{EigenvaluesFD}, any eigenvalue $\lambda \in \Spec(\A^{-1} \sharp \B)$ is a solution of the equation
\begin{equation*}
f(\lambda)^{2} - a - b\lambda^{2} = 0\,.
\end{equation*}
We distinguish three cases\,.
\begin{itemize}
\item If $b < 0$, then $x \to a + bx^{2}$ is decreasing on $\left(0\,, \infty\right)$. The function $f^{2}$ is increasing, so it follows that the equation 
\begin{equation*}
f(\lambda)^{2} = a + b\lambda^{2}
\end{equation*}
has at most one solution, i.e. $\left|\Spec(\A^{-1} \sharp \B)\right| = 1 \leq 2$\,.
\item If $b > 0$, the function $x \to a + bx^{2}$ is convex. Using that $f^{2}$ is strictly concave, it follows that the equation 
\begin{equation*}
f(\lambda)^{2} = a + b\lambda^{2}
\end{equation*}
has at most two solutions, i.e. $\left|\Spec(\A^{-1} \sharp \B)\right| \leq 2$\,.
\item If $b = 0$, then $\A \hat{\sigma}_{f} \B = a\A$, i.e. $a > 0$ and $f(\lambda)^{2} = a$ has a unique solution\,.
\end{itemize}
Finally $\left|\Spec(\A^{-1} \sharp \B)\right| \leq 2$\,.

\medskip

\noindent We now prove that $\A\B = \B\A$. We distinguish two cases\,.
\begin{enumerate}
\item If $\left|\Spec(\A^{-1} \sharp \B)\right| = 1$, then $\B$ is a multiple of $\A$, i.e. $\A$ and $\B$ commute\,.
 \item If $\left|\Spec(\A^{-1} \sharp \B)\right| = 2$, then $\Y$ is of the form $\Y = \U\diag\left(d_{1}\Id_{n_{1}}\,, d_{2}\Id_{n_{2}}\right)\U^{*}$, with $\U \in \U_{\mathbb{D}}$ and $n_{1} + n_{2} = n$. In particular, the eigenvalues $d_{1}$ and $d_{2}$ are such that
\begin{equation}
f(d_{1})^{2} = a + bd^{2}_{1}\,, \qquad f(d_{2})^{2} = a + bd^{2}_{2}\,.
\label{EquationProofOne}
\end{equation}
Let $1 \leq i \leq n_{1}$ and $n_{1} + 1 \leq j \leq n$. Assume that $a'_{i, j} \neq 0$ (the proof for $a'_{j,i} \neq 0$ is similar). In particular, it follows from Equation \eqref{KeyEquationForAPrime} that
\begin{equation}
f(d_{1})f(d_{2}) = a + bd_{1}d_{2}\,.
\label{EquationProofTwo}
\end{equation}
One can see that Equations \eqref{EquationProofOne} and \eqref{EquationProofTwo} implies that $a = 0$. Indeed, it follows from Equation \eqref{EquationProofOne} that
\begin{equation*}
f(d_{1})^{2} f(d_{2})^{2} = \left(a + bd^{2}_{1}\right)\left(a + bd_{2}^{2}\right) = a^{2} + ab\left(d^{2}_{1} + d^{2}_{2}\right) + b^{2}d^{2}_{1}d^{2}_{2}\,,
\end{equation*}
and squaring Equation \eqref{EquationProofTwo} gives
\begin{equation*}
\left(f(d_{1})f(d_{2})\right)^{2} = \left(a + bd_{1}d_{2}\right)^{2} = a^{2} + 2abd_{1}d_{2} + b^{2}d^{2}_{1}d^{2}_{2}\,.
\end{equation*}
Subtracting, we obtain
\begin{equation*}
ab\left(d^{2}_{1} + d^{2}_{2} - 2d_{1}d_{2}\right) = 0\,,
\end{equation*}
i.e.
\begin{equation*}
ab\left(d_{1} - d_{2}\right)^{2} = 0\,.
\end{equation*}
By assumption, $d_{1} \neq d_{2}$, and since $b \neq 0$ in this case (the equation $f(\lambda)^{2} = a$ has at most one solution), we conclude $a = 0$. In particular, we obtain that
\begin{equation*}
\frac{f(d_{1})^{2}}{d^{2}_{1}} = \frac{f(d_{2})^{2}}{d^{2}_{2}} = b\,,
\end{equation*}
i.e.
\begin{equation}
\frac{f(d_{1})}{d_{1}} = \frac{f(d_{2})}{d_{2}}\,.
\label{LastEquationCommutativity}
\end{equation}
As explained in Remark \ref{RemarkOperatorMonotone}, the function $f$ being operator monotone, it implies that the function $x \to \frac{f(x)}{x}$ is operator monotone decreasing. Using that $h$ is strictly concave, it follows that $f$ is not of the form $f(x) = cx$, i.e. $x \to \frac{f(x)}{x}$ is injective. Therefore it follows from Equation \eqref{LastEquationCommutativity} that $d_{1} = d_{2}$, which is impossible. Then 
\begin{equation*}
a'_{i,j} = a'_{j,i} = 0\,, \qquad \left(1 \leq i \leq n_{1}\,, n_{1}+1 \leq j \leq n\right)\,,
\end{equation*}
and we get that the matrix $\A'$ is of the form
\begin{equation*}
\A' = \begin{pmatrix} \A'_{1} & 0 \\ 0 & \A'_{2} \end{pmatrix}\,, \qquad \left(\A'_{1} \in \Mat_{n_{1}}(\mathbb{D})\,, \A'_{2} \in \Mat_{n_{2}}(\mathbb{D})\right)\,.
\end{equation*}
Finally, $\D$ and $\A'$ commute, so $\Y$ and $\A$ commute, and using that
\begin{equation*}
\A\B = \A\Y\A\Y \qquad \text{ and } \qquad \B\A = \Y\A\Y\A\,,
\end{equation*}
we get that $\A\B = \B\A$\,.
\end{enumerate}
The theorem follows\,.

\end{proof}

\begin{remark}

The proof of Theorem \ref{TheoremSquareRootH} shows that the operator monotonicity assumption can be weakened. Indeed, the spectral part only uses that $f^{2}$ is increasing and strictly concave, while the commutativity part uses that the function
\begin{equation*}
x\mapsto \frac{f(x)}{x}
\end{equation*}
is injective on $\left(0\,, \infty\right)$. Therefore, the conclusion of Theorem \ref{TheoremSquareRootH} remains valid for any continuous function $f: \left(0\,, \infty\right) \to \left(0\,, \infty\right)$, with $f(1)=1$, and satisfying these two conditions\,.

\label{WeakerConditionsOnF}

\end{remark}

\begin{remark}

The results of this paper shows that the conjecture of Choi, Kim, and Lim (see \cite{CHOIKIMLIM}) can be extended to a large family of alternative means. However, the assumptions on the representing function cannot be completely removed. Indeed, if $f(t)=t$, then it follows from Equation \eqref{EquationRicatti}
\begin{equation*}
\A \hat{\sigma}_{f} \B = \left(\A^{-1} \sharp \B\right) \A \left(\A^{-1} \sharp \B\right) = \B
\end{equation*}
In particular, $\A \hat{\sigma}_{f} \B$ is always linearizable, independently of the matrices $\A$ and $\B$\,.

\noindent This naturally leads to the following problem: characterize the functions 
\begin{equation*}
f: \left(0\,, \infty\right) \to \left(0\,, \infty\right)
\end{equation*}
for which
\begin{equation*}
\A \hat{\sigma}_{f} \B \text{ is linearizable} \quad \Leftrightarrow \quad  \A\B=\B\A \text{ and } \left|\Spec(\A^{-1}\sharp\B)\right|\leq 2\,.
\end{equation*}
We note that the Wasserstein mean provides an interesting example in this direction. Indeed, its representing function is
\begin{equation*}
f(t)=\frac{1+t}{2}\,,
\end{equation*}
which is not of the form $f(t) = \sqrt{h(t)}$ with $h$ a non-affine operator monotone function, and also don't satisfy the conditions of Remark \ref{WeakerConditionsOnF}. Nevertheless, Theorem \ref{TheoremConjectureCKL} shows that the same rigidity phenomenon still holds\,.

\end{remark}

\begin{remark}

Theorem \ref{TheoremSquareRootH} applies to a large family of alternative means: 
\begin{enumerate}
\item The harmonic mean:
\begin{equation*}
h(x)=\frac{2x}{1+x}, \qquad f(x)=\sqrt{\frac{2x}{1+x}}\,.
\end{equation*}
\item The logarithmic mean:
\begin{equation*}
h(x)=\frac{x-1}{\log(x)}\,, \qquad f(x)=\sqrt{\frac{x-1}{\log(x)}}\,.
\end{equation*}
\item The weighted geometric means:
\begin{equation*}
h(x)=x^{\alpha}\,, \qquad f(x)=x^{\frac{\alpha}{2}}\,, \qquad \left(0 < \alpha < 1\right)\,.
\end{equation*}
\end{enumerate}
Therefore, for each of the above alternative means, linearizability implies
\begin{equation*}
\A\B=\B\A \qquad \text{and} \qquad \left|\Spec(\A^{-1}\sharp\B)\right|\leq 2\,.
\end{equation*}

\end{remark}

\end{document}